\newcounter{todocounter}
\newtheorem{definition}{Definition}
\newtheorem{theorem}{Theorem}
\newtheorem{lemma}{Lemma}
\begin{document}
\title{Intersection Games and Bernstein Sets}

\AtEndDocument{%
  \par
  \medskip
  \begin{tabular}{@{}l@{}}%
    \textsc{James Atchley,}
    \textsc{1155 Union Cir, Denton, TX 76205} \\
    \textit{E-mail address}: \texttt{jamesatchley@my.unt.edu, } \\
     \textsc{Lior Fishman,}
    \textsc{1155 Union Cir, Denton, TX 76205} \\
    \textit{E-mail address}: \texttt{LiorFishman@unt.edu} \\
    \textsc{Saisneha Ghatti,}
    \textsc{1155 Union Cir, Denton, TX 76205} \\
    \textit{E-mail address}: \texttt{SaisnehaGhatti@my.unt.edu} \\
   
  \end{tabular}}

\author{James Atchley, Lior Fishman, Saisneha Ghatti}

\date{}
\maketitle

\begin{abstract}
The Banach-Mazur game, Schmidt's game and McMullen's absolute winning game are three quintessential intersection games. We investigate their determinacy on the real line when the target set for either player is a Bernstein set, a non-Lebesgue measurable set whose construction depends on the axiom of choice.

\end{abstract}
%Keywords: names, intersection games
%Classification: 03E25  Axiom of choice and related propositions, 	03E60  	Determinacy principles
%TODO: Acknowledgements, Arxiv, Affiliations
\maketitle\unmarkedfntext{2023 \textit{Mathematics Subject Classifications.} 03E25, 03E60.}
\maketitle\unmarkedfntext{\textit{Key words and phrases.} Determinacy, Intersection Games.}
\maketitle\unmarkedfntext{The authors thank the University of North Texas Incubator Program - an Undergraduate}
\maketitle\unmarkedfntext{Mentored Research Program.}

\section{Introduction}
The Banach-Mazur game was first mentioned in the so-called Scottish Book. The Scottish Book is a collection of problems raised by well-known mathematicians between the world wars in a Scottish bar (hence the name) in Poland. Some of these problems are solved, some unsolved, and some will probably stay unsolvable. As the story goes, one of the mathematicians would pose a question to the rest of his friends, and solving it would result in winning a prize, ranging from a small beer to a live goose. For this paper's purposes, the Scottish Book featured one of the first infinite positional games of perfect information. Referred to now as Problem 43 in the book, the question was proposed by S. Mazur. He defined a game, thereafter known as the Banach-Mazur game, in which it is easily seen that the second player wins on a co-meager set. He conjectured that this winning condition is not only sufficient but a necessary one as well. Banach proved his conjecture when playing on the real line and in 1957, J. Oxtoby \cite{Oxtoby} proved a generalized version of Mazur's conjecture. Coincidentally, D. Mauldin, a faculty member of our home university, the University of North Texas was an editor for a modern translation of the Scottish Book \cite{Mauldin}.
In 1966, W. Schmidt \cite{Schmidt}, used a modified version of the Banach-Mazur game (thereafter known as Schmidt's game) to prove that the set of badly approximable numbers (vectors) has a full Haussdorff dimension in $\mathbb{R}^d$. More recently, in 2009, C.T. McMullen \cite{McMullen} introduced yet another version of the Banach-Mazur game to show that in this new game, winning sets are preserved under quasiconformal maps. He named his version of the game as the Absolute Winning Game, and we will call it McMullen's game. All games we consider in this paper can be played on any Polish space, i.e., complete, metrizable and separable spaces. In what follows we will consider the Real line as our playground. The generalization to other Polish spaces is obvious.
%fix the gap, determined - if have winning strategy then determined, see Section 2 for precise definition
By the axiom of determinacy, these games are determined (see definition \ref{STR}), that is, one of the players always has a winning strategy.  The Axiom of Determinacy, introduced in 1962 by J. Mycielski and H. Steinhaus, states that certain games (Gale Stewart games) are determined. For what will prove essential for this paper, Mycielski and S. Swierczkowski proved that the Axiom of Determinacy implies that all sets in the real line $\mathbb{R}$ are Lebesgue measurable, a fact incompatible with the axiom of choice. 
%Gale-Stewart determined on open, close sets. Then, Martin - all games determined on Borel Set

The most well-known example of a non-Lebesgue measurable set is a Vitali set, constructed by using the axiom of choice. Another example of a set that is not Lebesgue measurable and can be constructed by applying the axiom of choice was introduced by F. Bernstein in 1908, thereafter known as a Bernstein set (see \cite{Oxtoby} for further details). While the axiom of determinacy deems all of these games determined, the focus of this paper is to prove that they are not determined on Bernstein sets whose construction heavily depends on the axiom of choice.    

\section{Description of the Games}

When we talk about games here, we will be talking about games played by two players, called Alice and Bob. Bob always goes first and Alice always goes second.  Each player takes turns choosing intervals on the Real line according to the rules of the game. After infinitely many stages a winner is decided by some rule.  For us, this rule will always be connected to some subset of \(\mathbb{R}\) called the target set.

Before we continue on to describe the specific games we will be talking about here, we must actually say what it means for a player to have a winning strategy in one of these games and what it means for a set to be determined

\begin{definition}
\label{STR}
    A \textsl{strategy} for
    either player is a rule that specifies what move they will make in every possible situation. In the case of Alice, she knows which intervals $B_0, A_0, B_1, A_1\dots B_n$ have been chosen in the previous moves, and the target set is known. From this information, her strategy must tell her
    which interval to choose for $A_n$ . Thus, a strategy for Alice is a sequence of
    closed-interval-valued functions $f_n(B_0, A_0, B_1, A_1 \dots B_n)$ outputting an intevral $A_n$ that is a valid move for Alice as her nth play of the game. 
\end{definition}
It should be noted that this is not the most general definition of a strategy, it can be defined for a much wider family of games than what we have so far described,  But this definition is all we need for our purposes.

\begin{definition}
    We say a strategy for either player is winning if it guarantees that the player wins regardless of what moves the other player makes.
\end{definition}

\begin{definition}
    We say that a game is not determined on $S \subset \mathbb{R}$  if neither player has a winning strategy when $S$ is the target set.  Otherwise we say it is determined.
\end{definition}

\subsection{The Banach-Mazur game}
The Banach-Mazur game is played by Alice and Bob, with a target set $S\subset \mathbb{R}$. Bob begins by choosing a nonempty closed interval $B_0$. Alice then chooses a nonempty closed interval $A_0 \subset B_0$. 
Bob responds by playing another interval $B_1 \subset A_0$ and the game continues in this way indefinitely. Alice wins the game if $\bigcap\limits_{i=0}^\infty A_i=\bigcap\limits_{i=0}^\infty B_i$ has non-empty intersection with $S$,  otherwise Bob wins. We note that if the target set $S$ is not dense the game is trivial as Bob can effectively win on his first move. Thus, assuming $S$ is dense, we can assume that Bob will play so as to ensure that $|B_i|\rightarrow0$ where $|I|$ denotes the length of the interval $I$. 

It is known that Alice has a winning strategy if and only if $S$ is co-meager. For a more detailed discussion of the game see \cite{Oxtoby}.

%different section

\subsection{McMullen's Absolute Winning game}
McMullen's game (the Absolute Winning game) is played by Alice and Bob and both players are given a parameter $\beta$ such that $0<\beta<\frac{1}{3}$ and a target set $S \subset \mathbb{R}.$ Bob starts by choosing a nonempty closed interval $B_0$. Alice then chooses $A_0 \subset B_0$ where the length of $A_0$ is $\beta$ times the length of $B_0.$  Bob responds by playing an interval $B_1$ of length $\beta$ times the length of  $B_0$ contained in $ B_0 \setminus A_0$.  Continuing in this way, we get that for every $n>0$, \[A_n \subset B_n\] and \[B_n \subset B_{n-1} \setminus A_{n-1}.\]  Alice wins the game if $\bigcap\limits_{n=0}^\infty B_i$ has non-empty intersection with and $S$ and otherwise Bob wins. See \cite{McMullen} for a more detailed discussion of the game.

\subsection{Schmidt's game}
Schmidt's game is played by Alice and Bob who are given two parameters $0 < \alpha, \beta <1$ and a target set $S \subset \mathbb{R}^d.$ As in the Banach-Mazur game, Bob starts by choosing a nonempty closed interval $B_0$. Alice then chooses a nonempty closed interval $A_0 \subset B_0$ with length $\alpha$ times that of $B_0$.  Bob responds by playing an interval $B_1 \subset A_0.$ with length $\beta$ times that of $B_1$. The game continues in this way indefinitely.
Alice wins the game if $\bigcap\limits_{i=0}^\infty B_i$ has non-empty intersection with $S$ otherwise Bob wins. 

If for a specific $\alpha$ and $\beta$ Alice has a winning strategy, we say that $S$ is $(\alpha,\beta)$-winning. If there exists some $\alpha$ such that $S$ is $(\alpha,\beta)$-winning for every $\beta$, we say that $S$ is $\alpha$-winning. 

One of the important consequences of a set $S$ being $\alpha$-winning is that the Hausdorff dimension of $S$ is maximal. In $\mathbb{R}^d$ this means it has dimension $d$.  We note that although at first glance Schmidt's game seems very similar to the Banach-Mazur game, it can be shown that some $\alpha$-winning sets are in fact meager, e.g., the set of badly approximable numbers.
See \cite{Schmidt} for a more detailed discussion of the game.

\section{Bernstein Sets and the determinacy of games on a Bernstein set}
 
It might seem that either Alice or Bob always has a winning strategy in these games regardless of the target set. This is not necessarily the case. If we assume the axiom of choice, we can give an example of a set for which neither player has a winning strategy in any of the above games. 
\begin{definition}
    A Bernstein set is a set that has non-empty intersection with all closed uncountable sets but contains no closed uncountable set.
\end{definition}
As mentioned in the introduction, the construction of a Bernstein set heavily relies on the axiom of choice. See \cite{Oxtoby} for details. It is clear from the definition that both a Bernstein set and it's compliment (which is also a Bernstein set) are dense.  

\begin{definition}
    A perfect set $\mathcal{P}$ is a set that is equal to the set of its accumulation points.   
\end{definition}
It is well known that a Perfect set in $\mathbb{R}$ is necessarily closed and uncountable see e.g. \cite{Rudin}

\subsection{The Banach-Mazur Game}
\begin{theorem}\label{banach}
    The Banach-Mazur Game is not determined on a Bernstein set.
\end{theorem}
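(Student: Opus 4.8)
The plan is to show that neither Alice nor Bob has a winning strategy when the target set $S$ is a Bernstein set. I will use two facts: first, that Alice has a winning strategy in the Banach-Mazur game if and only if $S$ is co-meager (stated in the excerpt); and second, the key structural property that against any fixed strategy of one player, the other player — by playing all "reasonable" responses — can force the intersection point to range over an entire perfect set, and a Bernstein set meets every perfect set while its complement does too.

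First I would rule out that Alice has a winning strategy. A Bernstein set $S$ is not co-meager: indeed, if $S$ were co-meager then its complement $\mathbb{R}\setminus S$ would be meager, hence contained in an $F_\sigma$ set of measure... — more cleanly, a co-meager set in $\mathbb{R}$ contains a dense $G_\delta$, which in any interval contains a perfect set (a Cantor-type set); but $S$ contains no uncountable closed set, contradiction. Hence by the quoted characterization, Alice has no winning strategy. (One could also argue symmetrically that $\mathbb{R}\setminus S$ is not co-meager, but that is not what is needed here.)

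Next, and this is the substantive half, I would show Bob has no winning strategy either. Fix an arbitrary strategy for Bob. I claim Alice can defeat it. The idea is the standard "fusion"/Baire-category-style argument: simulate the game along a tree of Alice's possible responses so that the set of all possible outcome points $\bigcap_i B_i$, as Alice varies her legal moves while Bob follows his fixed strategy, contains a perfect set $\mathcal{P}$. Concretely, one builds a Cantor scheme: at each finite binary string $s$, one has an interval $B_{|s|}^{(s)}$ arising from Bob's strategy after a partial play determined by $s$; Alice splits the available room into two disjoint closed subintervals (which she can do since intervals have more than one point), assigns them to $s0$ and $s1$, and lets Bob's strategy reply; iterating and taking care that diameters shrink to $0$ yields a continuous injection from $2^{\mathbb{N}}$ into $\mathbb{R}$ whose image $\mathcal{P}$ is perfect, and every point of $\mathcal{P}$ is the outcome of some legal play against Bob's strategy. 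Since $\mathcal{P}$ is perfect, it is uncountable and closed, so by the defining property of a Bernstein set, $\mathcal{P}\cap S\neq\emptyset$. Pick $x\in\mathcal{P}\cap S$ and let Alice play the branch of the tree converging to $x$; then $\bigcap_i B_i=\{x\}$ meets $S$, so Alice wins that play. Hence Bob's strategy is not winning, and as it was arbitrary, Bob has no winning strategy.

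The main obstacle, and the step that needs the most care, is the construction of the perfect set of outcomes: one must verify that Alice always has enough room to perform a genuine binary split (i.e. the intervals Bob hands her are nondegenerate, which holds because any closed interval with more than one point contains two disjoint closed subintervals — and we may assume $S$ dense so the game is nontrivial and Bob's intervals shrink), that Bob's strategy is indeed being applied to a legal partial play at every node, and that the diameters can be driven to zero along every branch so the scheme defines a point rather than a nested sequence of intervals with nonempty interior. Once the Cantor scheme is set up correctly, the Bernstein property does the rest with no further work. I would also remark that the same outcome-tree construction, suitably adapted to the rescaling constraints, is exactly what will be reused for McMullen's game and Schmidt's game in the sections that follow.
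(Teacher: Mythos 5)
Your proof is correct, and half of it coincides with the paper's. For Bob's side you do essentially what the paper does: against Bob's fixed strategy you grow a binary Cantor scheme of Alice's legal responses (with diameters forced to zero), obtain a perfect set of outcome points, and let the Bernstein property of $S$ produce a play that defeats the strategy; the paper phrases the same construction contrapositively, saying a winning strategy for Bob would force a perfect set inside the complement of the target set, which is again a Bernstein set. Where you genuinely diverge is Alice's side: the paper runs the identical tree construction against Alice's winning strategy, producing a perfect subset of the target set itself, whereas you invoke the Banach--Oxtoby characterization (Alice has a winning strategy if and only if $S$ is co-meager) together with the fact that a co-meager subset of $\mathbb{R}$ contains a perfect set. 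That route is legitimate --- the paper states the characterization as known --- and it is shorter on the page, but it imports a nontrivial external theorem, and the auxiliary fact that a dense $G_\delta$ contains a Cantor-type set is itself proved by the same fusion argument you already carry out for Bob, so little is actually saved. The paper's symmetric treatment has the further advantage that it transfers essentially verbatim to McMullen's game and Schmidt's game, where no co-meager characterization of Alice's winning sets is available; your Bob-side construction transfers in the way you indicate, but your Alice-side shortcut would not. Your explicit attention to nondegeneracy of the intervals and to shrinking diameters is a point the paper glosses over, and it is handled correctly.
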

\begin{proof}
The goal in this proof is to show that any winning set or losing set must either contain or be disjoint from some perfect set. Any winning strategy will generate a perfect set in either the target set or it's compliment. And therefore no Bernstein set can be winning or losing.

Let Alice and Bob play the Banach-Mazur game and suppose one of the players has a winning strategy.  
At some stage of the game, the player with the winning strategy chooses a closed, nonempty interval $I_k$ as part of the winning strategy. If the player with the winning strategy is Alice let $T$ be the target set and if it's Bob let $T$ be the complement of the target set. 

There exists two disjoint closed intervals $I_{k_0}, I_{k_1}$ contained in $I_k$ that are legal moves in the game. The player with the winning strategy will have a response to both called $I_{k_0}, I_{k_1} \subset I_k$ as part of their winning strategy. There are then four disjoint intervals $I_{k_{00}}, I_{k_{01}} \subset I_{k_0}$ and $I_{k_{10}}, I_{k_{11}} \subset I_{k_1}$ which are legal moves in response to the winning strategy.  We continue in this way and identify uniquely for any finite binary sequence $j$ the interval $I_j$ which is a possible  move for the player without a winning strategy in response to the player with the winning strategy.
\noindent 
We let $\tau$ be a binary sequence. Then $\tau$ uniquely identifies a point in $T$ by the assumption that a winning strategy is being employed. Let $\omega$ be a finite binary sequence. We say that $\omega < \tau$ if the sequences match up to $\omega$'s last digit. Define $I_{k_{\tau}} = \bigcap\limits_{\omega < \tau}I_{k_{\omega}}$ and note that for any two distinct binary sequences $\tau_1, \tau_2$, $I_{k_{\tau_1}} \neq I_{k_{\tau_2}}.$ As we used a winning strategy to construct $I_{k_{\tau}}$, it must be contained within $T$. Let A be the union of all the $I_{k_{\tau}}$ over the set of all binary sequences. By definition, $A \subset T$.

\noindent
$A$ is closed as it is the intersection of closed sets and therefore contains all of its accumulation points. Let $x \in A$. There exists a binary sequence $\tau_{*}$ such that $\{ x \} = I_{k_{\tau_{*}}}.$ Let $\omega_0$ be a finite binary sequence so that $\omega_0 < \tau_{*}.$ There exists some $x_0 \in I_{k_{\omega_0}} \cap A$ not equal to $x$.
We let $\omega_1 < \tau_{*}$ be a finite binary sequence strictly longer than $\omega_0$ and there exists $x_1 \in I_{k_{\omega_1}} \cap A$ not equal to $x$. We can continue in this way so that $x_i \in A$ for all $i$, and by construction $x_i \rightarrow x$ in $\mathbb{R}$ so we have that $x$ is an accumulation point of A. Thus, every $x \in A$ is an accumulation point, so since A is closed it is a perfect set.
Suppose that $T$ is a Bernstein set and either player has a winning strategy. Then the Bernstein set would contain a perfect set which is a contradiction. Thus, the Banach-Mazur game is not determined on a Bernstein set.
%, so neither player has a winning strategy on a Bernstein set.
\end{proof}

\subsection{McMullen's game}
\begin{theorem}
    %Neither player has a winning strategy in McMullen's game on a Bernstein set.
    McMullen's game is not determined on a Bernstein set.
\end{theorem}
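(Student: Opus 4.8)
The plan is to mimic the proof of Theorem~\ref{banach} almost verbatim. Suppose one of the players has a winning strategy $\sigma$, and let $T$ be the target set if Alice is the winner and its complement if Bob is; I will produce a perfect set contained in $T$, which is impossible when $S$ is Bernstein, since $\mathbb{R}\setminus S$ is again Bernstein and a perfect set is closed and uncountable. Two small adaptations are needed. First, in McMullen's game the interval lengths are forced, so $|B_n|=\beta^{\,n}|B_0|\to 0$, and consequently every completed play determines a single point $\bigcap_n B_n=\{p\}$; if the winner follows $\sigma$ then $p\in T$. Second, exactly as in Theorem~\ref{banach}, it suffices to assign to each finite binary string $j$ a closed interval $I_j$ that occurs as a game position in some play consistent with $\sigma$, with $I_{j0},I_{j1}\subseteq I_j$ disjoint; then $\{\,p_\tau:\tau\in 2^{\mathbb N}\,\}$ with $p_\tau=\bigcap_n I_{\tau|n}$ equals $\bigcap_k\bigcup_{|j|=k}I_j$, a decreasing intersection of finite unions of closed intervals, hence is closed, has no isolated point, and is a perfect subset of $T$.

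Thus everything reduces to the branching step, which is where the hypothesis $0<\beta<\tfrac13$ is used. The clean case is when Bob is the winner, so Alice moves freely: at a node ending with Bob's interval $B$, if the strategy's replies $\sigma(\dots,A),\sigma(\dots,A')$ to all legal Alice moves $A,A'$ pairwise met, then, being pairwise-intersecting intervals, they would have a common point $q$; but $\sigma(\dots,A)\subseteq B\setminus A$, so $q\notin A$ for every legal $A$, contradicting the fact that Alice can always cover any prescribed point of $B$ by a legal interval of length $\beta|B|$. Hence Alice has two moves whose $\sigma$-replies $I_{j0},I_{j1}$ are disjoint, and since each $\sigma$-reply lies inside Bob's previous interval, the two subtrees remain disjoint forever.

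The main obstacle is the case where Alice is the winner, so Bob moves freely and we must show that from any position Bob can, after finitely many rounds, split the play into two disjoint continuations. After Alice's forced move $A_n\subseteq B_n$ the set $B_n\setminus A_n$ has total length $(1-\beta)|B_n|>2\beta|B_n|$ and its largest component has length $>\beta|B_n|$; so unless $B_n\setminus A_n$ consists of a single usable component of length in $[\beta|B_n|,2\beta|B_n|)$, Bob immediately has two disjoint legal intervals and branches. In the remaining case all of Bob's legal moves share a common point, and the delicate point is to show Bob can nonetheless escape later: I would track the relative position $r_m\in[0,1]$ of a candidate \emph{forced} point inside $B_m$ and observe that passing from $B_m$ to $B_{m+1}$ acts on $r_m$ by a map with multiplier $1/\beta>3$, while the positions at which Alice can keep the point pinned are confined to two \emph{trapping windows} near $0$ and near $1$ whose size is controlled by $\beta$; because $\beta<\tfrac13$ these windows do not cover the interval, leaving a gap $(\beta,1-\beta)$ into which Bob can steer $r_m$ in finitely many moves, after which Alice can no longer pin the point and Bob produces two separated continuations. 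Verifying this last claim — that Alice cannot force the outcome of the subgame below any position to be a single point — is the technical heart of the proof; with it in hand, the construction of the binary tree and the remainder of the argument go through as in Theorem~\ref{banach}.
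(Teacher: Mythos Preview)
Your overall plan---build a perfect subset of $T$ (the target set if Alice wins, its complement if Bob wins) by branching the moves of the non-strategic player---is exactly the paper's approach, and your reduction to the single ``branching step'' is correct.

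For the Bob-wins case your Helly-type argument is valid, but the paper's is shorter and worth knowing: since Alice's interval and Bob's reply have the \emph{same} length $\beta|C|$, Alice may legally play $A'=B_0$ itself (where $B_0=\sigma(\ldots,A)$ is Bob's reply to some other Alice move $A$); Bob's strategy must then answer with some $B_1\subset C\setminus A'=C\setminus B_0$, automatically disjoint from $B_0$. This sidesteps the common-point argument entirely.

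For the Alice-wins case you are right that immediate branching can fail when $\tfrac14<\beta<\tfrac13$: Alice's move may leave a single usable component of length in $\big((1-2\beta)|B_n|,\,2\beta|B_n|\big)$, and then all of Bob's legal replies contain its midpoint. The paper does not address this---it simply writes ``similar to the Banach--Mazur game''---so on this point you are being more careful than the source. Your dynamical escape idea is the right one (it amounts to showing that no singleton $\{q\}$ is $\beta$-absolute winning: Alice can ``pin'' $q$ only when its relative position lies in $[0,\beta]\cup[1-\beta,1]$, and Bob's rescaling has multiplier $1/\beta>3$, which lets Bob drive the relative position into the gap $(\beta,1-\beta)$ in finitely many steps). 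However, you should actually carry this estimate through rather than label it ``the technical heart'' and stop; as written it remains a gap in your proposal, albeit one the paper itself shares.
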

\begin{proof}
The proof is similar to the proof of Theorem \ref{banach}, so we'll just sketch it and the reader can fill out the missing details. Let Alice and Bob play McMullen's game and let Alice have a winning strategy. Using a proof similar to that for the Banach-Mazur game, the target set must contain a perfect set. 
Now, we consider the case where Bob has a winning strategy. Let $B_0$ be some move for Bob in the winning strategy that is not his first move and let $C$ be Bob's move before $B_0$. Alice could have played in $C \setminus B_0$ as her previous move, so there also exists $B_1 \subset C \setminus B_0$ that is part of Bob's winning strategy in response to Alice's move, $C \setminus B_0.$ We can repeat this process for $B_0$ resulting in the disjoint moves $B_{00}, B_{01} \subset B_0$ and similarly for $B_1$ resulting in the disjoint moves $B_{10}, B_{11} \subset B_1.$ 
Continuing in this way, for every finite binary sequence $\omega$ we have an interval $B_{\omega}$ that is part of Bob's winning strategy. We can proceed similarly as in the Banach-Mazur game and show that the complement of the target set must contain a perfect set. Therefore, the target set cannot be a Bernstein set when Bob has a winning strategy.  So McMullen's game is not determined on Bernstein sets.

\end{proof}

\subsection{Schmidt's Game}

\begin{theorem}\label{schmidt}
    If $\beta > 2 - \frac{1}{\alpha}, \alpha > 2 - \frac{1}{\beta}$ then Schmidt's Game is not determined on a Bernstein set.
\end{theorem}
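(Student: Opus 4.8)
The plan is to mirror the proof of Theorem \ref{banach}: show that whoever has a winning strategy can be forced to generate a perfect set inside the relevant target set (the target set itself if Alice wins, its complement if Bob wins), contradicting the defining property of a Bernstein set. The essential new difficulty is that Schmidt's game, unlike the Banach--Mazur game, constrains the \emph{ratios} of successive intervals: Alice must shrink by exactly $\alpha$ and Bob by exactly $\beta$. So at a stage where the player with the winning strategy has just played an interval $I$, we cannot freely pick two disjoint legal sub-moves of arbitrary size; the opponent's next move has length exactly $\lambda|I|$ (with $\lambda=\alpha$ or $\beta$ depending on whose turn it is), and for the branching construction to work we need \emph{two} disjoint intervals of that prescribed length to fit inside $I$. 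This is possible precisely when $2\lambda|I| \le |I|$ fails to be the obstruction -- i.e. we need room for two disjoint intervals of relative length $\lambda$, which is exactly the kind of inequality encoded in the hypotheses $\beta > 2 - \tfrac1\alpha$ and $\alpha > 2 - \tfrac1\beta$. I would first unpack these inequalities: $\beta > 2 - \tfrac1\alpha$ rearranges to $\tfrac1\alpha + \beta > 2$, which after multiplying by $\alpha$ gives $1 + \alpha\beta > 2\alpha$, i.e. $1 - \alpha > \alpha - \alpha\beta = \alpha(1-\beta)$; one checks this is exactly the condition that inside an interval of length $\ell$ (Bob's move), after Alice plays a sub-interval of length $\alpha\ell$, there remain two disjoint places where Bob could put an interval of length $\beta(\alpha\ell)$ -- equivalently, that Alice cannot be forced into a unique next legal position, and symmetrically with the roles of $\alpha,\beta$ swapped for the other inequality.

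Concretely, I would argue as follows. Suppose Alice has a winning strategy. Fix any stage and an interval $A_k$ that Alice plays as part of that strategy; set $T = S$. Bob's next move must be a closed interval of length $\beta|A_k|$ inside $A_k$. By the inequality $\alpha > 2 - \tfrac1\beta$ (equivalently $\tfrac1\beta + \alpha > 2$, equivalently $1 + \alpha\beta > 2\beta$, equivalently $1-\beta > \beta(1-\alpha)$), there is enough room inside $A_k$ for two disjoint such intervals $B_{0}, B_{1}$; Alice's strategy responds to each with sub-intervals of length $\alpha\beta|A_k|$, and inside each of those Bob again has room for two disjoint legal moves by the same inequality applied one level down (the relevant inequality is scale-invariant since only the ratio matters). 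Iterating, for every finite binary string $\omega$ we obtain an interval $I_\omega$ that is a legal Bob-move consistent with Alice's winning strategy, with $I_{\omega 0}, I_{\omega 1}$ disjoint and both contained in $I_\omega$, and with diameters tending to $0$ along any branch (since each two levels the diameter is multiplied by $\alpha\beta < 1$). Then exactly as in Theorem \ref{banach}, for each infinite branch $\tau$ the nested intersection $\bigcap_{\omega < \tau} I_\omega$ is a single point of $T$ (because Alice was using a winning strategy along that play), distinct branches give distinct points, and the union $A$ of all these points is closed (it is the set of branch-limits of a Cantor scheme of closed sets) with every point an accumulation point, hence perfect. So $S \supseteq A$ contains a perfect set, contradicting that $S$ is a Bernstein set.

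For the case where Bob has a winning strategy, I would run the symmetric argument with $T = \mathbb{R}\setminus S$, using the other hypothesis $\beta > 2 - \tfrac1\alpha$. Here one takes an interval $B_k$ that Bob plays in his winning strategy (not his first move), with Alice's preceding move $A_{k-1}\supseteq B_k$; Alice's legal moves inside $A_{k-1}$ have length $\alpha|A_{k-1}|$, and the inequality guarantees that besides $B_k$'s parent there is room for an alternative disjoint Alice-move, to each of which Bob's strategy replies, and so on, building the binary Cantor scheme of Bob-moves. The limit points lie in $\mathbb{R}\setminus S$ and form a perfect set, again contradicting the Bernstein property. Hence under both hypotheses neither player can have a winning strategy, so Schmidt's game is not determined on a Bernstein set.

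The step I expect to be the main obstacle -- and the one I would want to write out carefully rather than wave at -- is verifying the geometric \emph{room-for-two-disjoint-intervals} claim and pinning down that the stated inequalities are exactly (not merely sufficient for) what is needed at every scale. Concretely: given an interval $J$ of length $\ell$, after a move of relative length $\lambda_1$ is carved out, one needs two disjoint sub-intervals of relative length $\lambda_1\lambda_2$; the worst case is when the carved-out move sits in the middle, leaving two gaps each of length $\tfrac{1-\lambda_1}{2}\ell$, so the requirement is $\tfrac{1-\lambda_1}{2} \ge \lambda_1\lambda_2$, i.e. $1 - \lambda_1 \ge 2\lambda_1\lambda_2$, i.e. $1 \ge \lambda_1(1 + 2\lambda_2)$ -- and I would need to check this reduces to the paper's inequalities with the correct assignment of $\lambda_1,\lambda_2$ to $\alpha,\beta$ in each of the two cases, and that it is self-similar so the induction goes through at all depths. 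Everything after that (the Cantor-scheme bookkeeping, closedness, perfectness, distinctness of branch points) is routine and identical to Theorem \ref{banach}.
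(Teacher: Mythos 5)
There is a genuine gap, and it sits exactly at the step you flagged as the main obstacle. Your branching mechanism requires, at every stage, two \emph{disjoint} legal responses of the opponent inside the strategic player's current interval. In Schmidt's game Bob's reply to Alice's interval $A_k$ must be a closed subinterval of $A_k$ of length exactly $\beta|A_k|$, so two disjoint such replies exist only when $2\beta \le 1$, i.e.\ $\beta \le \frac{1}{2}$ --- a condition that is independent of $\alpha$ and is \emph{not} implied by the theorem's hypotheses. For instance $\alpha=\beta=0.9$ satisfies $\beta > 2-\frac{1}{\alpha}$ and $\alpha > 2-\frac{1}{\beta}$, yet every legal Bob move then contains the midpoint of $A_k$, so no immediate branching is possible; the same happens with Alice's replies when $\alpha\ge\frac{1}{2}$ in the case where Bob has the winning strategy. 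Your algebraic reading of the inequalities (and the ``worst case'' computation at the end, which places the two candidate intervals in the gaps of $J\setminus A$) implicitly uses McMullen-type rules, where the reply avoids the previous move; in Schmidt's game the reply is nested inside it, and there the only constraint for two disjoint immediate replies is $\beta\le\frac{1}{2}$ (resp.\ $\alpha\le\frac{1}{2}$). So your argument only covers the easy subcase $\beta<\frac{1}{2}$ (resp.\ $\alpha<\frac{1}{2}$), which the paper dispatches in a few lines, and the hypotheses of the theorem play no role in it.

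The actual content of the paper's proof is the regime $\beta\ge\frac{1}{2}$ (and symmetrically $\alpha\ge\frac{1}{2}$), where branching cannot happen at the very next move and must instead be produced after finitely many moves. There the player without the winning strategy uses two long-run ``endpoint strategies'': always play the subinterval sharing the right endpoint of the opponent's last move, or always the left endpoint. Writing $\ell_n$ for the left endpoints and summing the increments, one gets a geometric series bounded below by $(1-\beta)\frac{\alpha}{1-\alpha\beta}$, and the hypotheses $\alpha > 2-\frac{1}{\beta}$, $\beta > 2-\frac{1}{\alpha}$ are used precisely to show this exceeds $\beta-\frac{1}{2}$, i.e.\ that the nested intersection is forced strictly to the right (resp.\ left) of the center of the strategic player's earlier interval. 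This yields, after finitely many moves, two disjoint intervals both consistent with the winning strategy, and the Cantor-scheme construction you describe is then run with branching at these non-adjacent stages. Without an argument of this kind (or some substitute for it), your proposal does not prove the theorem for the parameter range where it is nontrivial.
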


Before we prove this theorem for Schmidt's game, we will prove a lemma that will motivate an assumption that is used later. 
\begin{lemma}
If in Schmidt's game $\beta \leq 2  - \frac{1}{\alpha}$ then for any $x\in \mathbb{R}$ Bob can win on the target set $T = \mathbb{R} \setminus \{x\}.$
\end{lemma}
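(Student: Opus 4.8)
The plan is to show that when $\beta \leq 2 - \frac{1}{\alpha}$, Bob has enough room inside Alice's interval $A_n$ to place his next interval $B_{n+1}$ so that it avoids a prescribed point $x$ — unless $x$ lies outside $A_n$ already, in which case Bob is done. The key numerical fact I would extract first: given an interval $A_n$ of length $\ell$, Bob must choose $B_{n+1} \subset A_n$ of length $\beta \ell$. There are essentially two extreme positions for $B_{n+1}$ inside $A_n$: flush against the left endpoint, or flush against the right endpoint. The union of these two choices, $[\text{left endpoint}, \text{left} + \beta\ell] \cup [\text{right} - \beta\ell, \text{right}]$, fails to cover $A_n$ precisely when $2\beta\ell < \ell$, i.e. $\beta < \tfrac12$; but more relevantly, for \emph{any} target point $p \in A_n$, Bob can avoid $p$ with one of these two extreme intervals provided $p$ is not simultaneously within distance $\beta\ell$ of the right endpoint and within distance $\beta\ell$ of the left endpoint — that is, provided the two extreme intervals don't both contain $p$, which happens exactly when $2\beta\ell \leq \ell$ fails to trap $p$. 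Actually the cleaner statement: the left-flush interval contains $p$ iff $p$ is within $\beta\ell$ of the left end; the right-flush interval contains $p$ iff $p$ is within $\beta\ell$ of the right end; both contain $p$ only if $\ell \leq 2\beta\ell$, impossible here since $\beta < \tfrac12$ (note $\beta \leq 2 - \tfrac1\alpha < 1$, and if $\alpha \geq \tfrac12$ this gives $\beta < \tfrac12$ — I need to check the regime, but morally the hypothesis $\beta \leq 2 - \tfrac1\alpha$ is what guarantees Bob always has an escape). So at each stage Bob can keep $x$ outside $B_{n+1}$.

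Next I would set up the induction properly. Bob's strategy: on move $0$ he picks any $B_0$ with $x \notin B_0$ if possible; if $x$ is forced to be considered, he instead picks $B_0$ arbitrarily and thereafter maintains the invariant $x \notin B_n$ for all $n \geq 1$. At a generic stage, Alice has just played $A_n \subset B_n$ with $|A_n| = \alpha |B_n|$. If $x \notin A_n$ then Bob plays any legal $B_{n+1} \subset A_n$ and the invariant is preserved trivially. If $x \in A_n$, Bob invokes the numerical lemma above: among the left-flush and right-flush sub-intervals of $A_n$ of length $\beta|A_n|$, at least one omits $x$, and Bob plays that one. The point is to verify that the hypothesis $\beta \leq 2 - \tfrac1\alpha$ is exactly what makes this possible — I suspect the intended reading is that after Alice shrinks by $\alpha$ and Bob shrinks by $\beta$, the relevant ratio governing whether Bob can "sidestep" a point is controlled by this inequality, and I would trace through the geometry to confirm. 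Once the invariant $x \notin B_n$ holds for all $n \geq 1$, we get $x \notin \bigcap_n B_n$, so $\bigcap_n B_n \subset \mathbb{R} \setminus \{x\} = T$... wait, that's backwards — I need $\bigcap B_n$ to \emph{miss} $T$, i.e. $\bigcap B_n = \{x\}$ or $\bigcap B_n \subset \{x\}$. So actually Bob's goal is the opposite: he wants the nested intersection to land \emph{on} $x$, since Alice wins iff $\bigcap B_n$ meets $T = \mathbb{R}\setminus\{x\}$. Let me restate: Bob wants $\bigcap_n B_n = \{x\}$, forcing Alice's intersection to avoid $T$.

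So the correct strategy is the reverse: Bob always keeps $x \in B_n$. Starting from $B_0 \ni x$, when Alice plays $A_n \subset B_n$, if $x \in A_n$ Bob keeps going with $x$ trapped; if $x \notin A_n$ then Bob is in trouble — but here is where $\beta \leq 2 - \tfrac1\alpha$ saves him: the complement $B_n \setminus A_n$ has total length $(1-\alpha)|B_n|$, split into (at most) two pieces, and Bob needs a sub-interval of length $\beta|B_n|$... no wait, Bob's interval goes inside $A_n$ in Schmidt's game as stated ($B_{n+1} \subset A_n$), so if $x \notin A_n$ Bob cannot recover $x$ at all. Therefore the real content must be: Bob plays so that Alice is \emph{unable} to exclude $x$ from $A_n$, i.e. Bob shrinks $B_n$ down onto $x$ fast enough. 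Concretely, Bob maintains $x \in B_n$ and additionally ensures $x$ is near the \emph{center} of $B_n$ so that whatever $A_n \subset B_n$ Alice picks of length $\alpha|B_n|$, either $x \in A_n$ or Bob's next move can re-grab $x$ — this is impossible unless Alice is forced to keep $x$. The condition $\beta \leq 2 - \tfrac1\alpha \iff \alpha\beta \leq 2\alpha - 1 \iff \alpha(1 - \beta) \geq 1 - \alpha \iff \frac{1-\alpha}{\alpha} \leq 1 - \beta$ — rearranged, $(1-\alpha)|B_n| \leq (1-\beta)\cdot \alpha |B_n| = (1-\beta)|A_n|$: the length Alice "removed" is at most the length Bob is allowed to "remove" from $A_n$... this is the key inequality. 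I would spend the bulk of the argument making this precise: Bob, by centering, forces that \emph{any} $A_n$ Alice picks still contains a ball around $x$ of radius comparable to $|A_n|$, wait — I think the honest approach is: \textbf{the main obstacle} is correctly identifying which player this inequality favors and writing Bob's strategy as "keep $x$ interior to $B_n$ with enough margin that Alice's forced shrinkage $|A_n| = \alpha|B_n|$ cannot push $x$ out, then re-center." I would verify the margin bookkeeping closes under the hypothesis $\beta \leq 2 - \tfrac1\alpha$, and conclude $\bigcap_n B_n = \{x\}$, whence $\bigcap B_n \cap T = \emptyset$ and Bob wins.
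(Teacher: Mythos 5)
Your final strategy --- Bob re-centers each $B_{n+1}$ on $x$, which is always possible because the offset of $A_n$'s center from $x$ is at most $(1-\alpha)|B_n|/2$ while Bob's placement slack inside $A_n$ is $(1-\beta)|A_n|/2 = \alpha(1-\beta)|B_n|/2$, and your rearrangement $1-\alpha \leq \alpha(1-\beta)$ of the hypothesis is exactly this comparison, so $\bigcap_n B_n = \{x\}$ misses $T$ --- is essentially the paper's own proof, which states the same fact as the closed ball $B\bigl(x, \tfrac{\alpha\beta}{2}\bigr)$ fitting inside Alice's move. The only blemishes are the discarded false start (Bob avoiding $x$) and that you leave the final ``margin bookkeeping'' as an assertion rather than the one-line check above, but the key inequality and strategy you identify are the correct ones.
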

\begin{proof}
Fix an $x$.  Bob begins by choosing any interval with $x$ in the center. Assume without loss of generality that this interval has length one. 
 Alice's move will then necessarily contain that point since our assumptions on $\alpha, \beta$ implies $\alpha > \frac{1}{2}$. In addition, regardless of Alice's move, Bob can respond with an interval where $x$ is the center. This is because the distance between the center of Bob's initial move and the nearest edge of Alice's move is at least $\frac{\alpha \beta}{2}$ and so the closed ball, $B(x, \frac{\alpha \beta}{2})$ is contained in it.
The game can proceed in this way indefinitely and Bob wins since the intersection of all his moves will be $\{x\}$. 
In the same way, we can show that Alice can win on any dense set, even if it's only countable, when $\alpha \leq 2 - \frac{1}{\beta}$.
\end{proof}

The point of proving this is to show that these Schmidt's Game cases where $\alpha \leq 2 - \frac{1}{\beta}$ or $\beta \leq 2  - \frac{1}{\alpha}$ are somewhat uninteresting.  This is why it is fine to exclude them from the following theorem.\\

We are now ready to prove Theorem \ref{schmidt}.

\begin{proof}
If we assume $0<\beta<\frac{1}{2}$ and Alice has a winning strategy on a Bernstein set, then this is simple. As in the previous games, for each interval Alice plays, Bob has two disjoint responses as part of his strategy. At each stage of the game, the union of these possible responses is a union of closed, disjoint intervals. As shown before, the intersection of these closed sets is a perfect set which contradicts the assumption that there is a winning strategy.
We assume $\beta \geq \frac{1}{2}$. let $I_1$ be Alice's first move as part of her winning strategy on a Bernstein set. Let $x$ be the center of $I_1$. Our goal will be to prove that Bob can eventually force the game to be played on either the left or right of $x$.  We will prove this for the right side, the left side will be equivalent.

Without loss of generality let $|I_1| = 1$. Let $I_2 \subset I_1$ be Bob's response. As $\beta \geq \frac{1}{2}$, Bob's move contains $x$. Now, there are two possibilities from this point forward, $\bigcap\limits_{n=1}^\infty I_n \subset I_2 \cap (-\infty, x], \bigcap\limits_{n=1}^\infty I_n \subset I_2 \cap (x, \infty)$ where $I_n$ is the sequence of all plays in the game. Suppose that for any of Alice's moves $I_{2n - 1},$ Bob responds with an $I_{2n}$ which shares the right endpoint of $I_{2n - 1}.$ 
%\\\\Suppose $\bigcap\limits_{n=1}^\infty I_n \subset I_2 \cap (-\infty, x).$ Then for some $n,$ we have that 
%\begin{align*}
 %   \alpha\beta|I_{2n - 1}| &< (\beta - \frac{1}{2})|I_{2n - 1}| \\
  %  \implies \alpha\beta &< \beta - \frac{1}{2} \\
   % \implies \frac{\alpha\beta}{2} &< \beta - \frac{1}{2} \\
    %\implies \alpha &< 2 - \frac{1}{\beta}
%\end{align*}
%a contradiction.
We define $\ell_{n}$ as the left endpoint of an interval $I_n.$ Note that \[\lim_{n\rightarrow \infty} \{\ell_{n}\} \in \bigcap\limits_{n=1}^\infty I_n.\]
We will now show that $\bigcap\limits_{n=1}^\infty I_n \subset I_2 \cap (x, \infty)$ is the only possibility when Bob is using this right endpoint strategy. Now let Bob play some interval $I_k$ as part of this strategy. By the rules of the game, $\ell_{k+1} \geq \ell_k$ and so  
\[|\ell_{k} - \ell_{k+2}| \geq |\ell_{k+1} - \ell_{k + 2}| \geq \alpha^{k+1} \beta^k - \alpha^{k+1} \beta^{k+1}.\]
Thus, 
\[\sum_{k = 0}^\infty|\ell_{2k} - \ell_{2k+2}| \geq (\alpha - \alpha \beta)\sum_{k = 0}^\infty \alpha^k \beta^k = (1 - \beta) \frac{\alpha}{1 - \alpha\beta}.\]
We now will show that $(1 - \beta) \frac{\alpha}{1 - \alpha\beta} > \beta - \frac{1}{2}$. This will show that the sequence $\ell_n$ converges to a number greater than $x$ as $n \to \infty$.  This implies that, after finitely many steps, the players will be choosing intervals on the right of $x$.  Note that:
\begin{align*}
    \alpha &> 2 - \frac{1}{\beta} \\
    \alpha\beta &> 2\beta - 1 \\
    1 - \alpha\beta &< 2 - 2\beta \\
    \frac{1}{1 - \alpha\beta} &> \frac{1}{2 - 2\beta}.
\end{align*}
We can now use this to get 
\begin{align*}
    (1 - \beta) \frac{\alpha}{1 - \alpha\beta} &> (1 - \beta) \frac{\alpha\beta}{2 - 2\beta} \\
    &= \frac{\alpha\beta}{2} \\
    &> (2 - \frac{1}{\beta}) \frac{\beta}{2} \\
    &= \beta - \frac{1}{2}.
\end{align*}
This shows that \[|\ell_0 - \lim_{n\rightarrow \infty} \ell_{2n}| > \beta - \frac{1}{2}\] and so \[\lim_{n\rightarrow \infty} \ell_n > x.\] Thus, there exists an $I_{k_1}$ in Alice's strategy so that for all $y \in I_{k_1},$ $y > x.$
If Bob instead uses a strategy where he fixes the left endpoints of Alice's moves, then we can proceed similarly and we will get an interval $I_{k_0}$ in Alice's strategy such that for all $y \in I_{k_0},$ $y < x$  so $I_{k_0} \cap I_{k_1} = \O.$
Continuing this method, we can find some disjoint intervals $I_{k_{00}}, I_{k_{01}}, I_{k_{10}}, I_{k_{11}}$ as part of Alice's strategy. If we continue constructing these intervals, similar to our proofs for the Banach-Mazur game and McMullen's game, we can show that Alice's target set contains a perfect set. 
Therefore, any $\alpha\beta-$winning set where $\beta > 2 - \frac{1}{\alpha}$ and $\alpha > 2 - \frac{1}{\beta}$ will contain a perfect set. Thus, for such $\alpha, \beta$ a Bernstein set cannot be $\alpha, \beta-$winning.

In the case where Bob has a winning strategy everything is more or less identical.  now assume that Bob has a winning strategy.  If $\alpha < \frac{1}{2},$ for each interval Bob plays, Alice has two disjoint responses. At each stage of the game, the union of these possible responses is a union of closed, disjoint intervals. The intersection of these closed sets is a perfect set.

If $\alpha \geq \frac{1}{2},$ then Alice can use the same left endpoint and right endpoint strategies as Bob used above to eventually play two disjoint intervals, $I_{k_0}, I_{k_1},$ as in the previous case. Continuing in this way, we can construct a perfect set. Since we used Bob's winning strategy to construct this set, it must be contained in the complement of the target set. Thus, the complement of the target set contains a perfect set.

So we have that if either Alice or Bob has a winning strategy on a Bernstein set, then the either that Bernstein set contains a perfect set, or it's compliment does.  Either way we get a contradiction.

\noindent
Therefore, for any $\alpha, \beta$ where $\alpha > 2 - \frac{1}{\beta}$ and $\beta > 2 - \frac{1}{\alpha},$ neither player has a winning strategy on a Bernstein set and under those parameters Schmidt's Game is not determined.
\end{proof}


\begin{thebibliography}{9}

\bibitem{Oxtoby} J. C. Oxtoby, \textsl{Measure and Category}, Springer; 2nd edition (September 29, 1980).

\bibitem{McMullen} C. T. McMullen, \textsl{Winning sets, quasiconformal maps and Diophantine approximation} Geom.Funct.Anal.20(2010),no.3,726–740.

\bibitem{Schmidt} Wolfgang M. Schmidt, \textsl{On badly approximable numbers and certain games},Trans.Amer.Math.Soc.123(1966),27–50.

\bibitem{Rudin} Walter Rudin, \textsl{Principles of Mathematical Analysis} McGraw-Hill, Inc., Third Edition, 1976.

\bibitem{Mauldin}\textsl{The Scottish Book: Mathematics from The Scottish Café, with Selected Problems from The New Scottish Book}, Birkh{\"a}user; 2nd edition (2015)
    
\end{thebibliography}
\end{document}